\newtheorem{theorem}{Theorem}[section]
\newtheorem{corollary}[theorem]{Corollary}
\newtheorem{proposition}[theorem]{Proposition}
\newtheorem{remark}[theorem]{Remark}
\newcommand{\Z}{\mathbb Z}
\newcommand{\N}{\mathbb N}
\newcommand{\Q}{\mathbb Q}
\begin{document}

\title{An adaptive upper bound on the Ramsey numbers $R(3,\dots,3)$ }

\author{S. Eliahou}

\date{}

\maketitle

\begin{abstract} Since 2002, the best known upper bound on the Ramsey numbers $R_n(3)=R(3,\dots,3)$ is $R_n(3) \le n!(e-1/6)+1$ for all $n \ge 4$. It is based on the current estimate $R_4(3) \le 62$. We show here how any closing-in on $R_4(3)$ yields an improved upper bound on $R_n(3)$ for all $n \ge 4$. For instance, with our present adaptive bound, the conjectured value $R_4(3)=51$ implies $R_n(3) \le n!(e-5/8)+1$ for all $n \ge 4$.
\end{abstract}

\section{Introduction}
For $n \ge 1$, the $n$-color Ramsey number $R_n(3)=R(3,\dots,3)$ denotes the smallest $N$ such that, for any $n$-coloring of the edges of the complete graph $K_N$, there is a monochromatic triangle. See e.g.~\cite{GRS, Ram, Soi} for background on Ramsey  theory. There is a well known recursive upper bound on $R_n(3)$ due to \cite{GG}, namely
\begin{equation}\label{rec}
R_n(3) \le n(R_{n-1}(3)-1)+2
\end{equation}
for all $n \ge 2$. Currently, the only exactly known values of $R_n(3)$ are $R_1(3)=3$, $R_2(3)=6$ and $R_3(3)=17$. As for $n=4$, the current state of knowledge is 
$$
51 \le R_4(3) \le 62.
$$
The lower bound is due to~\cite{Ch} and the upper bound to~\cite{FKR}, down from the preceding bound $R_4(3) \le 64$ in~\cite{S-F}. Moreover, it is conjectured in~\cite{XuR} that
$$
R_4(3) = 51.
$$
Here is a brief summary of successive upper bounds on $R_n(3)$. In \cite{GG}, the authors proved that
$$
R_n(3) \le n!e+1
$$
for all $n \ge 2$. Whitehead's results~\cite{Whi} led to
$$
R_n(3) \le n!(e-1/24)+1
$$
for all $n \ge 2$, and Wan~\cite{Wan} further improved it to
$$
R_n(3) \le n!(e-e^{-1}+3)/2+1.
$$
The last improvement came in 2002, when it was proved in \cite{XuXC} that
$$
R_n(3) \le n!(e-1/6)+1
$$
for all $n \ge 4$. That bound relies on the estimate $R_4(3) \le 62$ by \cite{FKR}. 

\smallskip
Because of the recurrence relation~\eqref{rec}, any improved upper bound on $R_{k}(3)$ for some $k \ge 4$ will yield an improved upper bound on $R_{n}(3)$ for all $n \ge k$. Our purpose here is to make this automatic improvement explicit. For instance, combined with our adaptive upper bound, the above-mentioned conjecture $R_4(3) = 51$ implies 
$$R_n(3) \le n!(e-5/8)+1$$
for all $n \ge 4$. This would be a substantial improvement over the current upper bound $n!(e-1/6)+1$, since $e-1/6 \approx 2.55$ while $e-5/8 \approx 2.09$.

\section{Main results}
As reported in~\cite{radzi}, it is proved in~\cite{XuXC} that 
$$
R_n(3) \le n!(e-1/6)+1
$$
for all $n \ge 4$. But the latter paper is in Chinese and not easily accessible to English readers. In this section, we prove a somewhat more general statement. We shall need the formulas below.

\medskip
\subsection{Useful formulas}
In proving $R_n(3) \le n!e+1$, the authors of ~\cite{GG} used without comment the formula
$$
\lfloor (n+1)!e \rfloor = (n+1)\lfloor n!e \rfloor+1
$$
for all $n\ge 1$. For convenience, we provide a proof here, as a direct consequence of the auxiliary formula below.

\begin{proposition}\label{n!e} For all $n \ge 1$, we have
$
\lfloor n!e \rfloor = \sum_{i=0}^n\, {n!}/{i!} \,.
$
\end{proposition}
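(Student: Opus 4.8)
The plan is to start from the Taylor series $e = \sum_{i=0}^\infty 1/i!$, multiply through by $n!$, and split the resulting sum into the ``integer part'' coming from $i \le n$ and the ``tail'' coming from $i \ge n+1$. Explicitly, $n!e = \sum_{i=0}^n n!/i! + \sum_{i=n+1}^\infty n!/i!$. Each term $n!/i!$ with $i \le n$ is a positive integer, so $S_n := \sum_{i=0}^n n!/i! \in \mathbb{Z}$. Hence the proposition reduces to showing that the tail $T_n := \sum_{i=n+1}^\infty n!/i!$ lies strictly between $0$ and $1$, which forces $\lfloor n!e\rfloor = S_n$.

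\medskip
The core estimate is therefore $0 < T_n < 1$. Positivity is clear since every term is positive. For the upper bound, I would compare $T_n$ term by term with a geometric series: for $i = n+1+k$ with $k \ge 0$, we have $n!/i! = 1/\big((n+1)(n+2)\cdots(n+1+k)\big) \le 1/(n+1)^{k+1}$, so
$$
T_n \le \sum_{k=0}^\infty \frac{1}{(n+1)^{k+1}} = \frac{1/(n+1)}{1 - 1/(n+1)} = \frac{1}{n}.
$$
Since $n \ge 1$, this gives $T_n \le 1/n \le 1$, and one checks the inequality is strict (the geometric comparison is strict as soon as $k \ge 1$, i.e.\ whenever there is more than one tail term, and for $n=1$ one verifies $T_1 = e - 2 \approx 0.718 < 1$ directly). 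Thus $0 < T_n < 1$ in all cases $n \ge 1$.

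\medskip
Combining the two observations, $n!e = S_n + T_n$ with $S_n \in \mathbb{Z}$ and $0 < T_n < 1$, so by definition of the floor function $\lfloor n!e \rfloor = S_n = \sum_{i=0}^n n!/i!$, as claimed. I do not anticipate a genuine obstacle here; the only mild subtlety is making the tail bound \emph{strict} uniformly in $n \ge 1$ rather than merely $\le 1$, which is why I would single out the geometric comparison being strict past the first tail term (and dispatch $n=1$ by hand if one wants to be scrupulous). Everything else is bookkeeping with absolutely convergent series.
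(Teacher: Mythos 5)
Your proof is correct and follows essentially the same route as the paper: split $n!e$ into the integer sum $\sum_{i=0}^n n!/i!$ plus a tail, and show the tail lies strictly in $(0,1)$. The only (cosmetic) difference is the tail estimate --- you compare with a geometric series to get $T_n \le 1/n$ (handling $n=1$ separately for strictness), while the paper compares the tail termwise with $\sum_{i\ge 2} 1/i! = e-2 < 1$; both work.
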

\begin{proof} We have $ e = 1/0!+1/1!+\sum_{i=2}^{\infty} 1/i! = 2+\sum_{i=2}^{\infty} 1/i!$. Since $e < 3$, it follows that $\sum_{i=2}^{\infty} 1/i! < 1$. Now $n!e= \sum_{i=0}^n\, {n!}/{i!} + \sum_{i=n+1}^\infty\, {n!}/{i!}$. The left-hand summand is an integer, while the right-hand one satisfies 
$$
\sum_{i=n+1}^\infty\, {n!}/{i!} = \sum_{j=1}^\infty\, \frac{1}{\Pi_{k=1}^j (n+k)} \le \sum_{i=2}^\infty\, 1/{i!} < 1. 
$$ This concludes the proof.
\end{proof}
\begin{corollary}[\cite{GG}]\label{fact} For all $n \ge 1$, we have
$
\lfloor (n+1)!e \rfloor = (n+1)\lfloor n!e \rfloor+1.
$
\end{corollary}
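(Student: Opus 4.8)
The plan is to deduce the corollary directly from Proposition~\ref{n!e}, which expresses $\lfloor m!e\rfloor$ as the integer $\sum_{i=0}^{m} m!/i!$. First I would apply that formula with $m=n+1$ to get
\[
\lfloor (n+1)!e\rfloor \;=\; \sum_{i=0}^{n+1}\frac{(n+1)!}{i!}.
\]
Then I would apply it with $m=n$, multiply through by $n+1$, and use $(n+1)\cdot n!/i! = (n+1)!/i!$ to obtain
\[
(n+1)\lfloor n!e\rfloor \;=\; (n+1)\sum_{i=0}^{n}\frac{n!}{i!} \;=\; \sum_{i=0}^{n}\frac{(n+1)!}{i!}.
\]

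Comparing the two sums, the second runs only up to $i=n$ whereas the first carries one extra term, namely the $i=n+1$ term $(n+1)!/(n+1)!=1$. Hence $\lfloor (n+1)!e\rfloor = \sum_{i=0}^{n}(n+1)!/i! + 1 = (n+1)\lfloor n!e\rfloor + 1$, which is the claim. The hypothesis $n\ge 1$ is precisely what is needed for Proposition~\ref{n!e} to be applicable on both sides.

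I do not expect any genuine obstacle here: granting Proposition~\ref{n!e}, the identity is purely formal, and the only point worth writing out is that the lone surviving summand for $i=n+1$ accounts exactly for the ``$+1$'' in the statement. Should one wish to bypass the proposition, one can instead track fractional parts: with $\sigma=\sum_{i\ge n+1}n!/i!$ and $\rho=\sum_{i\ge n+2}(n+1)!/i!$, one has $(n+1)\sigma = (n+1)!/(n+1)! + \rho = 1+\rho$ with $0<\rho<1$ (by the same tail estimate as in Proposition~\ref{n!e}), so $(n+1)!e=(n+1)\bigl(\lfloor n!e\rfloor+\sigma\bigr)=(n+1)\lfloor n!e\rfloor+1+\rho$, whence taking floors gives the result; but quoting Proposition~\ref{n!e} is cleaner.
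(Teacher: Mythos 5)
Your proof is correct and is essentially identical to the paper's: both apply Proposition~\ref{n!e} at $n+1$ and at $n$, factor out $(n+1)$, and observe that the lone extra term $(n+1)!/(n+1)!=1$ supplies the ``$+1$''. Nothing to add.
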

\begin{proof} Applying Proposition~\ref{n!e} for $n+1$ and then for $n$, we have
\begin{eqnarray*}
\lfloor (n+1)!e \rfloor & = & \sum_{i=0}^{n+1}\, {(n+1)!}/{i!} \\
& = & (n+1)\sum_{i=0}^{n}\, {n!}/{i!} + (n+1)!/(n+1)! \\
& = & (n+1)\lfloor n!e \rfloor+1. \qedhere
\end{eqnarray*}
\end{proof}

\subsection{An optimal model}
We now exhibit an optimal model for the recursion~\eqref{rec}.

\begin{proposition}\label{lem rec} Given $q \in \Q$, let $f \colon \N \to \Z$ be defined by
$f(n)= \lfloor n!(e-q) \rfloor+1$ for $n \in \N$. Then, for all $n \in \N$ such that $n!q \in \Z$, we have
\begin{equation}\label{rec f}
f(n+1) = (n+1)(f(n)-1)+2.
\end{equation}
\end{proposition}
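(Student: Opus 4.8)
The plan is to reduce the claimed identity to the floor formula in Corollary~\ref{fact}, which handles the case $q=0$, and then account for the correction term coming from $q$. Write $f(n) = \lfloor n!(e-q)\rfloor + 1 = \lfloor n!e - n!q\rfloor + 1$. The hypothesis is precisely that $n!q$ is an integer; since $(n+1)!q = (n+1)\cdot n!q$ is then also an integer, the same hypothesis holds at level $n+1$, so I may pull both $n!q$ and $(n+1)!q$ out of their respective floor brackets. Thus $f(n) = \lfloor n!e\rfloor - n!q + 1$ and $f(n+1) = \lfloor (n+1)!e\rfloor - (n+1)!q + 1$.

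Next I would substitute Corollary~\ref{fact}, namely $\lfloor (n+1)!e\rfloor = (n+1)\lfloor n!e\rfloor + 1$, into the expression for $f(n+1)$:
\[
f(n+1) = (n+1)\lfloor n!e\rfloor + 1 - (n+1)!q + 1 = (n+1)\bigl(\lfloor n!e\rfloor - n!q\bigr) + 2,
\]
using $(n+1)!q = (n+1)\cdot n!q$. Finally, recognizing $\lfloor n!e\rfloor - n!q = f(n) - 1$, this is exactly $(n+1)(f(n)-1) + 2$, which is \eqref{rec f}.

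The only subtlety — and it is mild — is making sure the integrality hypothesis is invoked correctly: the floor $\lfloor x - m\rfloor = \lfloor x\rfloor - m$ identity requires $m \in \Z$, and one must check that $n!q \in \Z$ indeed forces $(n+1)!q \in \Z$ so that the same manipulation is legal for the $n+1$ term. That is immediate from $(n+1)! = (n+1)\cdot n!$. Beyond this bookkeeping there is no real obstacle; the proposition is essentially a repackaging of Corollary~\ref{fact} together with the linearity of multiplication by $(n+1)$ on the term $n!q$. I would present it as a short three-line display computation preceded by the remark that the hypothesis propagates from $n$ to $n+1$.
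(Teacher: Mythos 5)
Your proof is correct and follows essentially the same route as the paper's: pull $(n+1)!q$ out of the floor using integrality, apply Corollary~\ref{fact}, factor out $(n+1)$, and reabsorb $n!q$ into the floor to recognize $f(n)-1$. The observation that $n!q\in\Z$ implies $(n+1)!q\in\Z$ is exactly the justification the paper uses implicitly at the second line of its computation.
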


\begin{proof} 
We have
\begin{eqnarray*}
f(n+1) & = & \lfloor (n+1)!(e-q) \rfloor +1 \\
& = & \lfloor (n+1)!e \rfloor - (n+1)!q+1 \,\,\, \textrm{ [since $(n+1)!q \in \Z$]} \\
& = & (n+1)\lfloor n!e \rfloor +1 - (n+1)!q+1 \,\,\, \textrm{[by Corollary~\ref{fact}]} \\
& = & (n+1)\lfloor n!(e-q) \rfloor +2  \,\,\, \textrm{ [since $n!q \in \Z$]} \\
& = & (n+1)(f(n)-1)+2. \qedhere
\end{eqnarray*}
\end{proof}

\subsection{An adaptive bound}
Our adaptive upper bound on $R_n(3)$ is provided by the following statements.

\begin{proposition}\label{main prop} Let $k \in \N$ and $q \in \Q$ satisfy $k \ge 2$, $R_k(3) \le k!(e-q)+1$ and $k!q \in \N$. Then $R_n(3) \le n!(e-q)+1$ for all $n \ge k$.  
\end{proposition}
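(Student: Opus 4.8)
The plan is to argue by induction on $n \ge k$, using the recurrence~\eqref{rec} together with the optimal model of Proposition~\ref{lem rec}. Fix $q$ as in the hypothesis and let $f \colon \N \to \Z$ be given by $f(n) = \lfloor n!(e-q) \rfloor + 1$. Since $R_n(3)$ is an integer and the additive $1$ in $n!(e-q)+1$ is an integer, the inequality $R_n(3) \le n!(e-q)+1$ is equivalent to $R_n(3) \le f(n)$; so it suffices to prove $R_n(3) \le f(n)$ for every $n \ge k$, and then read off the conclusion from $f(n) \le n!(e-q)+1$.

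Before running the induction I would record the arithmetic observation that makes Proposition~\ref{lem rec} applicable at each stage: for every $n \ge k$ the rational $n!/k!$ is in fact a positive integer, whence $n!q = (n!/k!)\cdot(k!q) \in \N$, using the hypothesis $k!q \in \N$. Thus $n!q \in \Z$ for all $n \ge k$, which is exactly the side condition needed in Proposition~\ref{lem rec}. The base case $n=k$ is then immediate from the hypothesis $R_k(3) \le k!(e-q)+1$, i.e.\ $R_k(3) \le f(k)$. For the inductive step, assume $R_n(3) \le f(n)$ for some $n \ge k$. Applying~\eqref{rec}, then the induction hypothesis (the factor $n+1$ being positive, the inequality $R_n(3)-1 \le f(n)-1$ is preserved), and finally Proposition~\ref{lem rec} with the side condition $n!q \in \Z$ just verified, we get
$$
R_{n+1}(3) \le (n+1)\bigl(R_n(3)-1\bigr)+2 \le (n+1)\bigl(f(n)-1\bigr)+2 = f(n+1) \le (n+1)!(e-q)+1,
$$
which closes the induction.

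I do not expect a genuine obstacle here; the proof is essentially a bookkeeping exercise tying together~\eqref{rec} and Proposition~\ref{lem rec}. The two points that do require a moment's care — and which I would make explicit rather than leave implicit — are first, the passage from the bound $R_n(3) \le n!(e-q)+1$ to the floored version $R_n(3) \le f(n)$ (and back), and second, the propagation of the divisibility hypothesis $k!q \in \N$ to $n!q \in \N$ for all $n \ge k$, which is what licenses the use of Proposition~\ref{lem rec} at every step of the induction. Without the latter, the recursion satisfied by $f$ need not coincide with~\eqref{rec}, and the comparison breaks down.
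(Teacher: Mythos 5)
Your proof is correct and follows essentially the same route as the paper: both reduce the statement to the single-step passage $n \to n+1$ via the recurrence~\eqref{rec} and the identity~\eqref{rec f} of Proposition~\ref{lem rec}, with the divisibility $k!q \in \N$ propagating to $n!q \in \N$ because $n!/k! \in \N$. Your version merely makes the induction and the equivalence between $R_n(3) \le n!(e-q)+1$ and $R_n(3) \le f(n)$ explicit, which the paper leaves implicit.
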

\begin{proof} As in Proposition~\ref{lem rec}, denote $f(n)=\lfloor n!(e-q) \rfloor+1$ for $n \in \N$. By assumption, we have
\begin{equation}\label{bound}
R_k(3) \le f(k)
\end{equation}
and $k!q \in \Z$. It suffices to prove the claim for $n=k+1$, since if $k!q \in \N$ then $(k+1)!q \in \N$. By successive application of \eqref{rec}, \eqref{bound} and \eqref{rec f}, we have
\begin{eqnarray*}
R_{k+1}(3) & \le & (k+1)(R_k(3)-1)+2 \\
& \le & (k+1)(f(k)-1)+2 \\
& = & f(k+1).
\end{eqnarray*}
Note that using \eqref{rec f} is allowed by Proposition~\ref{lem rec} and the assumption $k!q \in \N$.
\end{proof}

\begin{theorem}\label{main thm} Let $k \ge 2$ be an integer. Let $a \in \N$ satisfy $a \le \lfloor k!e \rfloor - R_k(3)+1$, and let $q=a/k!$. Then $R_n(3) \le n!(e-q)+1$ for all $n \ge k$.  
\end{theorem}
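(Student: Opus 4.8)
The plan is to deduce this directly from Proposition~\ref{main prop}, whose hypotheses I will verify for the given integer $k$ and for the rational $q = a/k!$. Two conditions must be checked: that $k!q \in \N$, and that $R_k(3) \le k!(e-q)+1$; the condition $k \ge 2$ is given outright.

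First I would dispose of the integrality condition: $k!q = k!\cdot(a/k!) = a$, which lies in $\N$ by hypothesis. This is, incidentally, the only place where the assumption ``$a \in \N$'' (rather than merely $a \in \Q$) is used.

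Next, for the inequality, I would rewrite $k!(e-q)+1 = k!e - k!q + 1 = k!e - a + 1$. Since $\lfloor k!e \rfloor \le k!e$, the hypothesis $a \le \lfloor k!e \rfloor - R_k(3) + 1$ rearranges to $R_k(3) \le \lfloor k!e \rfloor - a + 1 \le k!e - a + 1 = k!(e-q)+1$, which is exactly what is required. (In fact, since $R_k(3)$ and $a$ are integers, one obtains the slightly stronger $R_k(3) \le \lfloor k!e - a\rfloor + 1 = \lfloor k!(e-q)\rfloor + 1 = f(k)$, but the weaker real-number inequality is all that Proposition~\ref{main prop} demands.)

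With both hypotheses of Proposition~\ref{main prop} verified, its conclusion gives $R_n(3) \le n!(e-q)+1$ for all $n \ge k$, finishing the proof. I do not anticipate a genuine obstacle: the mathematical content sits in Propositions~\ref{lem rec} and~\ref{main prop}, and Theorem~\ref{main thm} is just the convenient repackaging in which one selects the numerator $a$ of $q$ directly; the only mild bookkeeping point is keeping track of the floor when passing from the stated hypothesis on $a$ to the real-number bound needed upstream.
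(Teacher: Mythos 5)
Your proposal is correct and is essentially the paper's own proof: both verify $k!q=a\in\N$ and pass from $a \le \lfloor k!e\rfloor - R_k(3)+1$ to $R_k(3) \le k!e-a+1 = k!(e-q)+1$ via $\lfloor k!e\rfloor \le k!e$, then invoke Proposition~\ref{main prop}. Your parenthetical remark about the floor is a harmless extra observation; nothing differs in substance.
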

\begin{proof} We have $a \le k!e-R_k(3)+1$, so $R_k(3) \le k!e-a+1=k!(e-q)+1$. Moreover $k!q=a \in \N$. The conclusion follows from Proposition~\ref{main prop}.
\end{proof}

\begin{remark} Theorem~\ref{main thm} is the best possible application of Proposition~\ref{main prop}. Indeed, with the value $a'=\lfloor k!e \rfloor - R_k(3)+2$ and $q'=a'/k!$, it no longer holds that $R_k(3) \le k!(e-q')+1$.
\end{remark}

\subsection{The case $k=4$}
We now apply the above result to the case $k=4$. We only know $51 \le R_4(3) \le 62$ so far. Note that by Proposition~\ref{n!e}, we have 
\begin{equation}\label{4!e}
\lfloor 4!e \rfloor = \sum_{i=0}^4 4!/i! = 24+24+12+4+1= 65.
\end{equation}

\begin{proposition}\label{a} Let $a \in \N$ satisfy $a \le 66-R_4(3)$. Then setting $q=a/24$, we have $R_n(3) \le n!(e-q)+1$ for all $n \ge 4$.
\end{proposition}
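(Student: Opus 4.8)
The plan is to derive Proposition~\ref{a} as an immediate specialization of Theorem~\ref{main thm} to the case $k=4$. First I would record that $k=4 \ge 2$, so the hypotheses of Theorem~\ref{main thm} on $k$ are met. Next I would invoke the computation~\eqref{4!e}, namely $\lfloor 4!e \rfloor = 65$, to rewrite the quantity $\lfloor k!e \rfloor - R_k(3)+1$ appearing in Theorem~\ref{main thm} as $65 - R_4(3)+1 = 66 - R_4(3)$.

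With that substitution in hand, the assumption $a \le 66 - R_4(3)$ of Proposition~\ref{a} becomes exactly the assumption $a \le \lfloor k!e \rfloor - R_k(3)+1$ required by Theorem~\ref{main thm}, and the definition $q = a/24 = a/4!$ matches $q = a/k!$. I would then simply apply Theorem~\ref{main thm} to conclude $R_n(3) \le n!(e-q)+1$ for all $n \ge 4$, which is the desired statement.

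There is essentially no obstacle here: the only thing to be careful about is that $a$ is genuinely a nonnegative integer, which is part of the hypothesis $a \in \N$, and that $66 - R_4(3) \ge 0$ so that such an $a$ exists — this holds since $R_4(3) \le 62 < 66$, though strictly speaking the statement is vacuously fine even without that remark. The whole proof is a one-line invocation of Theorem~\ref{main thm} together with the arithmetic fact~\eqref{4!e}.

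\begin{proof}
Apply Theorem~\ref{main thm} with $k=4$. By~\eqref{4!e} we have $\lfloor 4!e \rfloor = 65$, so
$$
\lfloor k!e \rfloor - R_k(3)+1 = 65 - R_4(3)+1 = 66-R_4(3).
$$
Thus the hypothesis $a \le 66-R_4(3)$ is precisely $a \le \lfloor k!e \rfloor - R_k(3)+1$, and $q=a/24=a/k!$. Theorem~\ref{main thm} then yields $R_n(3) \le n!(e-q)+1$ for all $n \ge 4$.
\end{proof}
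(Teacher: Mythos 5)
Your proof is correct and follows exactly the paper's own route: Proposition~\ref{a} is a direct specialization of Theorem~\ref{main thm} to $k=4$, using the computation $\lfloor 4!e \rfloor = 65$ from~\eqref{4!e} to identify $\lfloor k!e \rfloor - R_k(3)+1$ with $66-R_4(3)$. Your version just spells out the substitution more explicitly than the paper does.
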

\begin{proof} By \eqref{4!e}, $a$ satisfies the hypotheses of Theorem~\ref{main thm}. The conclusion follows.
\end{proof}
When the exact value of $R_4(3)$ will be known, Proposition~\ref{a} will provide an adapted upper bound on $R_n(3)$ for all $n \ge 4$. In the meantime, here are three possible outcomes.
\begin{corollary}[\cite{XuXC}]\label{62} $R_n(3) \le n!(e-1/6)+1$ for all $n \ge 4$.
\end{corollary}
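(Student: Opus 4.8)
The plan is to read off Corollary~\ref{62} as the special case of Proposition~\ref{a} corresponding to the currently known upper bound $R_4(3)\le 62$. Concretely, I would first invoke the estimate $R_4(3)\le 62$ from~\cite{FKR}, so that $66-R_4(3)\ge 66-62=4$. Hence the integer $a=4$ satisfies the hypothesis $a\le 66-R_4(3)$ of Proposition~\ref{a}.

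Next I would compute the associated rational $q=a/24=4/24=1/6$, and simply apply Proposition~\ref{a} with this choice of $a$. Its conclusion is exactly $R_n(3)\le n!(e-q)+1=n!(e-1/6)+1$ for all $n\ge 4$, which is the statement of the corollary.

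There is essentially no obstacle here: the corollary is a one-line instantiation, and the only arithmetic is the trivial identity $4/24=1/6$ together with the inequality $66-62=4$. The entire content has already been packaged into Proposition~\ref{a} (itself a consequence of Theorem~\ref{main thm} via the computation $\lfloor 4!e\rfloor=65$ in~\eqref{4!e}), so the proof amounts to choosing the optimal admissible value of $a$ for the best known bound on $R_4(3)$. If anything warrants a word of comment, it is that $a=4$ is the largest admissible integer when $R_4(3)\le 62$, so this $q$ is the strongest bound Proposition~\ref{a} yields from the current knowledge; but even that remark is optional for the proof itself.

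\begin{proof}
By~\cite{FKR} we have $R_4(3)\le 62$, hence $66-R_4(3)\ge 4$. Thus $a=4$ satisfies the hypothesis of Proposition~\ref{a}. Setting $q=a/24=1/6$, Proposition~\ref{a} yields $R_n(3)\le n!(e-1/6)+1$ for all $n\ge 4$.
\end{proof}
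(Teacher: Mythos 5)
Your proof is correct and matches the paper's own argument exactly: both invoke $R_4(3)\le 62$, take $a=4$ in Proposition~\ref{a}, and conclude with $q=4/24=1/6$. Nothing further is needed.
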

\begin{proof} Since $R_4(3) \le 62$, we may take $a=4$ in Proposition~\ref{a}. The conclusion follows from that result with $q=a/4!=1/6$. 
\end{proof}

Note that the above bound dos not extend to $n=3$, since $R_3(3)=17$, whereas by Proposition~\ref{n!e}, we have $\lfloor 3!(e-1/6)\rfloor +1= \lfloor 3!e\rfloor = 3!+3!+3+1=16$.

\medskip
As mentioned earlier, it is conjectured in~\cite{XuR} that $R_4(3)=51$. If true,  Proposition~\ref{a} will yield the following improved upper bound.

\begin{corollary}\label{51} If $R_4(3)=51$, then $R_n(3) \le n!(e-5/8)+1$ for all $n \ge 4$.
\end{corollary}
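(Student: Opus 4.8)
The plan is to apply Proposition~\ref{a} with the largest admissible choice of $a$. Under the hypothesis $R_4(3)=51$, the constraint $a \le 66 - R_4(3)$ reads $a \le 15$, so I would take $a = 15 \in \N$. Setting $q = a/24 = 15/24 = 5/8$, Proposition~\ref{a} then delivers $R_n(3) \le n!(e-q)+1 = n!(e-5/8)+1$ for all $n \ge 4$, which is exactly the claim.

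There is essentially no obstacle: everything substantive has already been done in Theorem~\ref{main thm} and Proposition~\ref{a} (themselves resting on the recursion~\eqref{rec}, the optimal model of Proposition~\ref{lem rec}, and Corollary~\ref{fact}). The only points to check are arithmetic, namely that $15 \le 66-51$ and that $15/24$ reduces to $5/8$; both are immediate. So the ``hard part'' is really just recognizing that taking $a$ maximal is what yields the strongest bound, which is precisely the content of the remark following Theorem~\ref{main thm}.

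To round things off, I would include a sanity check analogous to the remark after Corollary~\ref{62}, confirming that the bound genuinely fails to extend below $n=4$. Since $3!q = 6\cdot 5/8 = 15/4$, we have $\lfloor 3!(e-5/8)\rfloor + 1 = \lfloor 6e - 15/4\rfloor + 1$, and $6e \approx 16.31$ gives $6e - 15/4 \approx 12.56$, hence the value $13$, which is strictly less than $R_3(3) = 17$. This shows the hypothesis $n \ge 4$ cannot be dropped, exactly as for the currently known bound.
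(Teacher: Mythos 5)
Your proof is correct and matches the paper's own argument exactly: apply Proposition~\ref{a} with the maximal admissible value $a=66-51=15$, giving $q=15/24=5/8$. The additional sanity check that the bound fails at $n=3$ is accurate but not required for the statement.
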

\begin{proof} By Proposition~\ref{a}, with $a = 66-51=15$ and $q=15/4!=5/8$.
\end{proof}
As noted in the Introduction, this would be a substantial improvement over the current upper bound $n!(e-1/6)+1$, since $e-1/6 \approx 2.55$ whereas $e-5/8 \approx 2.09$. 

\smallskip
An intermediate step would be, for instance, to show $R_4(3) \le 54$ if at all true. This would yield the following weaker improvement.

\begin{corollary}\label{54} If $R_4(3) \le 54$, then $R_n(3) \le n!(e-1/2)+1$ for all $n \ge 4$.
\end{corollary}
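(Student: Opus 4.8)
The plan is to reduce Corollary~\ref{54} to a direct application of Proposition~\ref{a}, exactly as was done for Corollaries~\ref{62} and~\ref{51}. Proposition~\ref{a} says that for any $a \in \N$ with $a \le 66 - R_4(3)$, setting $q = a/24$ gives $R_n(3) \le n!(e-q)+1$ for all $n \ge 4$. So the only work is to choose the right $a$ under the hypothesis $R_4(3) \le 54$.

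First I would observe that $R_4(3) \le 54$ gives $66 - R_4(3) \ge 12$, so we may take $a = 12$ in Proposition~\ref{a}; this choice satisfies $a \in \N$ and $a \le 66 - R_4(3)$. Then I would compute $q = a/24 = 12/24 = 1/2$. Plugging into the conclusion of Proposition~\ref{a} yields $R_n(3) \le n!(e - 1/2) + 1$ for all $n \ge 4$, which is precisely the claimed bound.

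There is essentially no obstacle here: the corollary is an immediate specialization of the adaptive bound, and the proof fits on one line, namely ``By Proposition~\ref{a}, with $a = 66 - 54 = 12$ and $q = 12/4! = 1/2$.'' If one wanted to be slightly more careful, the only point worth checking is that the hypothesis $R_4(3) \le 54$ (rather than equality) still suffices, which it does because Proposition~\ref{a} only requires the inequality $a \le 66 - R_4(3)$, and a smaller value of $R_4(3)$ only makes this easier to satisfy with $a = 12$; in fact one could then take a larger $a$ and get an even better bound, but $a = 12$ is the value that produces the clean constant $1/2$ advertised in the statement.

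\begin{proof} By Proposition~\ref{a}, with $a = 66-54=12$ and $q=12/4! = 1/2$.
\end{proof}
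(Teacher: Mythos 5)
Your proof is correct and is essentially identical to the paper's own: both apply Proposition~\ref{a} with $a = 66 - 54 = 12$ and $q = 12/24 = 1/2$. Your extra remark that the hypothesis $R_4(3) \le 54$ (rather than equality) suffices because a smaller $R_4(3)$ only relaxes the constraint $a \le 66 - R_4(3)$ is a sensible, if unnecessary, sanity check.
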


\begin{proof} By Proposition~\ref{a}, with $a = 66-54=12$ and $q=a/4!=1/2$.
\end{proof}

\begin{remark} The above three corollaries are best possible applications of Proposition~\ref{a}, as in each case we took the largest admissible value for $a \in \N$.
\end{remark}

\subsection{The case $k=5$}
Let us also briefly consider the case $k=5$. At the time of writing, we only know $162 \le R_5(3) \le 307$. See~\cite{radzi}.

\begin{proposition}\label{b} Let $a \in \N$ satisfy $a \le 327-R_5(3)$. Then setting $q=a/120$, we have $R_n(3) \le n!(e-q)+1$ for all $n \ge 5$.
\end{proposition}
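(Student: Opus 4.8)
The plan is to recognize Proposition~\ref{b} as the exact analogue of Proposition~\ref{a} with $k=4$ replaced by $k=5$, so the entire argument reduces to verifying that the hypotheses of Theorem~\ref{main thm} are met and then invoking it. First I would compute $\lfloor 5!e \rfloor$ using Proposition~\ref{n!e}: we have $\lfloor 5!e \rfloor = \sum_{i=0}^{5} 5!/i! = 120+120+60+20+5+1 = 326$. This is the key numerical input, and it explains the constant $327$ appearing in the statement, since Theorem~\ref{main thm} requires $a \le \lfloor k!e \rfloor - R_k(3) + 1 = 326 - R_5(3) + 1 = 327 - R_5(3)$.

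Next, given $a \in \N$ with $a \le 327 - R_5(3)$, I would set $q = a/120 = a/5!$ and check the two conditions of Theorem~\ref{main thm} for $k=5$: first, $a \le \lfloor 5!e \rfloor - R_5(3) + 1$, which is exactly the hypothesis $a \le 327 - R_5(3)$ by the computation above; second, that $a \in \N$, which is given. The theorem then yields $R_n(3) \le n!(e-q)+1$ for all $n \ge 5$, which is precisely the conclusion of Proposition~\ref{b}. So the proof is a two-line matter: cite \eqref{4!e}-style the value $\lfloor 5!e\rfloor = 326$, then apply Theorem~\ref{main thm}.

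The only place where anything could go wrong is the arithmetic in evaluating $\sum_{i=0}^{5} 5!/i!$, and more subtly, keeping straight that the bound in Theorem~\ref{main thm} has the $+1$ (coming from the $i=k$ term $k!/k! = 1$ in Proposition~\ref{n!e}), so that the relevant threshold is $327 - R_5(3)$ rather than $326 - R_5(3)$. There is no genuine obstacle here — it is a direct specialization — but I would double-check that the current best bounds $162 \le R_5(3) \le 307$ actually make the proposition non-vacuous: with $R_5(3) \le 307$ one may take $a \le 20$, giving for instance $a = 20$, $q = 20/120 = 1/6$, which recovers nothing better than Corollary~\ref{62} as one would expect at this level of knowledge, whereas the conjectured or improved values of $R_5(3)$ would give more. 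That sanity check is worth recording as a remark but is not part of the proof itself.
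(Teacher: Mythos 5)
Your proof is correct and matches the paper's own argument exactly: compute $\lfloor 5!e\rfloor = 326$ via Proposition~\ref{n!e} and apply Theorem~\ref{main thm} with $k=5$, $q=a/5!$. The arithmetic and the sanity check against $R_5(3)\le 307$ (giving back only $q=1/6$) are both right.
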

\begin{proof} By Theorem~\ref{main thm} and the value $\lfloor 5!e \rfloor=326$ given by Proposition~\ref{n!e}.
\end{proof}

Here again are three possible outcomes. Knowing only $R_5(3) \le 307$ does not allow to improve the current estimate $R_n(3) \le n!(e-1/6)+1$. At the other extreme, if $R_5(3) = 162$ holds true, it would yield $R_n(3) \le n!(e-11/8)+1$ for all $n \ge 5$. As an intermediate estimate, if $R_5(3) \le 227$ holds true, it would imply $R_n(3) \le n!(e-5/6)+1$ for all $n \ge 5$.

\section{Concluding remarks}

\subsection{On $\lim_{n \to \infty} R_n(3)^{1/n}$}

The adaptive upper bound on $R_n(3)$ given by Theorem~\ref{main thm} may still be quite far from reality, as the asymptotic behavior of $R_n(3)$ remains poorly understood. For instance, is there a constant $c$ such that $R_{n+1}(3) \le cR_n(3)$  for all $n$? Or, maybe, such that $R_n(3) \ge c n!$ for all $n$? The former would imply that $\lim_{n \to \infty} R_n(3)^{1/n}$, known by~\cite{ChG} to exist, is finite, whereas the latter would imply $\lim_{n \to \infty} R_n(3)^{1/n}=\infty$. At the time of writing, it is not known whether that limit is finite or infinite. See e.g.~\cite{LRX}, where this question is discussed together with related problems.

\subsection{Link with the Schur numbers}
The Schur number $S(n)$ is defined as the largest integer $N$ such that for any $n$-coloring of the integers $\{1,2,\dots,N\}$, there is a monochromatic triple of integers $1 \le x,y,z \le N$ such that $x+y=z$. The existence of $S(n)$ was established by Schur in~\cite{Schur}, an early manifestation of Ramsey theory. Still in~\cite{Schur}, Schur proved the upper bound
\begin{equation}\label{S(n)}
S(n) \le n!e-1
\end{equation}
for all $n \ge 2$. The similarity with the upper bound $R_n(3) \le n!e+1$ proved 40 years later in~\cite{GG} is striking. In fact, there is a well known relationship between these numbers, namely
\begin{equation}\label{S and R}
S(n) \le R_n(3)-2.
\end{equation}
Thus, via~\eqref{S and R}, our adaptive upper bound on $R_n(3)$ given by Theorem~\ref{main thm} also yields an upper bound on $S(n)$.

\bigskip
\noindent
\textbf{Acknowledgements.} The author wishes to thank L. Boza and S.P. Radziszowski for informal discussions during the preparation of this note and for their useful comments on a preliminary version of it.

{\small

\bigskip
\noindent
\textbf{Author's adress:} \\ Shalom Eliahou, Univ. Littoral C\^ote d'Opale, \\ EA 2597 - LMPA - Laboratoire de Math\'ematiques Pures et Appliqu\'ees Joseph Liouville, F-62228 Calais, France, \\
CNRS, FR 2956, France. \\ E-mail: \url{eliahou@univ-littoral.fr}
}

\end{document}